\documentclass[12pt]{article}
\usepackage{amsmath,amsfonts}
\usepackage{xcolor}
\usepackage{tikz}
\usepackage{graphicx}
\numberwithin{equation}{section}
\newtheorem{theorem}{Theorem}[section]
\newtheorem{lemma}[theorem]{Lemma}

\newcommand{\qed}{\nolinebreak\hfill\vbox{\hrule\hbox{\vrule\kern3pt\vbox{\kern6pt}\kern3pt\vrule}\hrule}}
\newenvironment{pf}{{\noindent\bf Proof.}}{\qed\newline}

\begin{document}

\newcommand{\avint}{{- \hspace{-3.5mm} \int}}

\title{The Bifurcation Phenomenon for the Regularized Two-Phase Problem Associated with the $p$-Laplacian}
\author{Alaa Haj Ali,\thanks{Department of Mathematics, University of Michigan, Ann Arbor, MI 48109, USA}
~ 
Catherine Lebiedzik\thanks{Department of Mathematics, Wayne State University, 656 W Kirby, Detroit, MI 48202, USA. Corresponding author, email: ar6554@wayne.edu }
~~\&
Peiyong Wang\thanks{Department of Mathematics, Wayne State University, 656 W Kirby, Detroit, MI 48202, USA.  Peiyong Wang is partially supported by a Simon's Collaboration Grant. }
}

\date{}
\maketitle
\begin{abstract}
In this paper, we verify a bifurcation phenomenon regarding the multiplicity of weak solutions, subject to the Dirichlet boundary condition, of a regularized two-phase free boundary problem associated with the $p$-Laplacian. In fact, we prove the existence of a mountain pass solution when the boundary data is small. In this case, three weak solutions of the regularized problem exist: the $p$-harmonic function, a minimizer of the corresponding functional, and the mountain pass solution. Finally, we consider the special case of radially symmetric solutions.  By solving the associated nonlinear ordinary differential equation in the unregularized case, we show explicitly how the Bernoulli condition at the free boundary gives rise to the bifurcation of solutions. 
\end{abstract}

\textbf{AMS Classifications:} 35R35, 35B32, 35A02, 35J62, 35J25
\medskip

\textbf{Keywords:} Bifurcation, $p$-Laplacian, free boundary problem, Mountain Pass Theorem, Fr\'{e}chet derivative

\newpage


\section{Introduction}\label{introduction}
In this paper, we study a bifurcation in a Dirichlet boundary value problem derived from the functional $J_{p,\varepsilon}$ defined by
\begin{equation}\label{functional}
J_{p,\varepsilon}(u) = \int_{\Omega} \frac{1}{p}|\nabla u|^p\,\Gamma_{\varepsilon}(u)
+ \left(\frac{1}{p}|\nabla u|^p + u\right)\Theta_{\varepsilon}(u) + q(x)\lambda_{\varepsilon}(u)\,dx
\end{equation}
which is a regularized version of the functional
\begin{equation}\label{fbp}
\begin{split}
J_p(u) = &\int_{\Omega}\left(\frac{1}{p}|\nabla u(x)|^p + q(x)\lambda^p_2\right)\chi_{\{u>0\}}(x) \\
&+ \left(\frac{1}{p}|\nabla u(x)|^p + u(x) + q(x)\lambda^p_1 \right)\chi_{\{u\leq 0\}}\,dx.
\end{split}
\end{equation}
Here, $p > 1$, $\Omega\subset\mathbb{R}^n$ is a smooth bounded domain, $q(x) > 0$ is a weight function, $0 < \lambda_1 < \lambda_2$ are constants, and $\Gamma_{\varepsilon}$ and $\Theta_{\varepsilon}\in C^{\infty}_0(\mathbb{R}\rightarrow [0,1])$, and $\lambda_{\varepsilon}\in C^{\infty}_0(\mathbb{R}\rightarrow [\lambda_1, \lambda_2])$ are smooth functions that satisfy
\begin{equation}\label{fcts1}
\Gamma_{\varepsilon}(s) + \Theta_{\varepsilon}(s) = 1
\end{equation}
\begin{equation}\label{fcts2}
\Gamma_{\varepsilon}(s) = \left\{\begin{array}{ll} 0 &\ \ \text{if\ } s\leq 0\\ 1 &\ \ \text{if\ } s\geq \varepsilon \end{array}\right.
\end{equation}
\begin{equation}\label{fcts3}
\lambda_{\varepsilon}(s) = \left\{\begin{array}{ll} \lambda^p_1 &\ \ \text{if\ } s\leq 0\\ \lambda^p_2 &\ \ \text{if\ } s\geq \varepsilon. \end{array}\right.
\end{equation}
We can readily rewrite the functional $J_{p,\varepsilon}$ in the form
\begin{equation}\label{functional-1}
J_{p,\varepsilon}(u) = \int_{\Omega} \frac{1}{p}|\nabla u|^p + u\Theta_{\varepsilon}(u) + q(x)\lambda_{\varepsilon}(u)\,dx.
\end{equation}

The Euler-Lagrange equation of the regularized problem (\ref{functional-1}) is expressed as
\begin{equation}\label{euler}
-\Delta_p u + \theta_{\varepsilon}(u)u + \Theta_{\varepsilon}(u) + q(x)\mu_{\varepsilon}(u) = 0\ \ \text{in\ } \Omega
\end{equation}
where $\theta_{\varepsilon}(s) = \Theta'_{\varepsilon}(s)$, and $\mu_{\varepsilon}(s) = \lambda'_{\varepsilon}(s)$. The \textbf{$p$-Laplacian} for a $C^2$ function $u$ is defined by
\begin{equation*}
\Delta_p u = \mathrm{div}\left(|\nabla u|^{p-2}\nabla u\right) = (p-2)|\nabla u|^{p-4}\left<D^2u\nabla u, \nabla u\right> + |\nabla u|^{p-2}\Delta u
\end{equation*}
in the divergence and non-divergence forms, respectively. We complete the equation (\ref{euler}) into a Dirichlet boundary value problem by imposing
\begin{equation}\label{bdry}
u = \sigma\ \ \text{on\ } \partial\Omega
\end{equation}
for a given positive function $\sigma$ in the Sobolev space $W^{1,p}(\Omega)$ such that $0 < \varepsilon < \inf_{\partial\Omega} \sigma(x)$ and $J_{p,\varepsilon}(\sigma) < \infty$. 

It is well-known that a minimizer of the functional $J_p$ satisfies in the weak sense the two-phase free boundary problem associated with the $p$-Laplacian:
\begin{equation}\label{unreg}
\left\{\begin{array}{ll}\Delta_pu = 0 &\ \ \text{in\ }\{u>0\},\\ \Delta_pu = 1 &\ \ \text{in\ }\{u\leq 0\}^{\circ},\\ \left(u^+_{\nu}\right)^p = \left(u^-_{\nu}\right)^p + Q(x)\left(\lambda^p_2 - \lambda^p_1\right) &\ \ \text{on\ }\partial\{u>0\}\cap\Omega,\\ u = \sigma &\ \ \text{on\ }\partial\Omega,\end{array}\right.
\end{equation}
for some positive function $Q$. Equations of (\ref{unreg}) represent a two-phase Bernoulli type free boundary problem. Therefore, our boundary value problem (\ref{euler})--(\ref{bdry}) is a regularized two-phase free boundary problem associated with the $p$-Laplacian with a fattened free boundary. Two-phase Bernoulli type free boundary problems have been under extensive research since the 1980's. Since the publication of seminal papers \cite{ACF} and \cite{C1}--\cite{C3}, the study of the regularity and existence of a solution of the two-phase problem has continued to draw attention. Vast efforts have since been made to extend the theory to non-Laplacian linear, nonlinear, and degenerate situations. The survey papers \cite{DFS} and \cite{FLS} provide details of the developments in this regard.

In this paper, we are concerned with the number of weak solutions to the free boundary problem. In fact, we establish a bifurcation regarding the existence and multiplicity of solutions of the two-phase problem (\ref{euler})--(\ref{bdry}) for the degenerate $p$-Laplacian. Bifurcation phenomena of this type for the regularized one-phase problem for the Laplace equation or the more general $p$-Laplace equation were proved in \cite{CW} and \cite{HW}. Later, the bifurcation for the regularized two-phase problem for the Laplace equation was also justified in \cite{CHRTW}. Here, we need to overcome the technical gaps that differentiate the $p$-Laplacian from the Laplacian to show the existence of the bifurcation. 

The main result of this paper is Theorem \ref{thirdsolution}, which states that problem (\ref{euler})-(\ref{bdry}) has three weak solutions, namely a trivial solution, a functional minimizer and a mountain-pass solution for relatively small boundary data. This type of existence result has been proved for the regularized versions of the one-phase and two-phase free boundary problems associated with the Laplacian (Theorem 2.2 of \cite{CW} and Theorem 2.2 of \cite{HW}), and the one-phase problem for the $p$-Laplacian (Theorem 2.3 of \cite{CHRTW}).

The study of the bifurcation originates in our efforts to interpret the physical meaning of the mathematical model. Mathematically, an evolutionary Bernoulli-type problem carries the uniqueness of a solution, while the limiting stationary problem generally does not, \cite{LW}. So, it is important to know when and how there are multiple solutions to the stationary problem. In addition, when there are multiple solutions to the stationary problem, it is meaningful to identify each solution according to its respective stability. These are the goals of the current, previous (\cite{CW}, \cite{HW}, \cite{CHRTW}) and subsequent papers.

Finally, as an illustration of this bifurcation phenomenon, in this paper we consider the special
case of radially symmetric solutions to the unregularized problem \eqref{unreg} with
$\Omega = B_1(0)$ and $Q(x) \equiv 1$. In this setting the boundary value problem
\eqref{euler}--\eqref{bdry} reduces to a nonlinear ordinary differential equation on
each side of the free radius $r_0$, and we explicitly show that the Bernoulli condition
at the free boundary can be rearranged into a relationship $\sigma = f(r_0)$ between
the prescribed boundary data $\sigma$ and the geometric datum $r_0$ locating the free
boundary. Our computation shows that for a given value of the boundary data $\sigma$,
the equation $\sigma = f(r_0)$ admits zero, one, or two solutions $r_0 \in (0,1)$;
according to whether $\sigma$ lies above, at, or below the maximum value of $f$. While
this unregularized, radially symmetric case falls outside the scope of Theorem~2.2, the
explicit multiplicity established here motivates our expectation that an analogous
bifurcation result holds for the unregularized problem on general domains.

\section{The Main Theorem and Its Proof}
We denote the trivial solution of the problem (\ref{euler})--(\ref{bdry}) by $u_0$. It is given by the $p$-harmonic function with boundary data $\sigma$, i.e., the weak solution of boundary value problem
\begin{equation*}
\left\{\begin{array}{ll}
-\Delta_p u = 0 &\ \text{\ in\ }\Omega\\
u = \sigma &\ \text{\ on\ }\partial\Omega,\end{array}\right.
\end{equation*}

We now show a minimizer $u_2$ of the functional $J_{p,\varepsilon}$ exists, which is also a weak solution of (\ref{euler})--(\ref{bdry}). We include a short proof for the sake of completeness, although the argument is standard. Notice that, in general, a minimizer of $J_{p,\varepsilon}$ is not unique. 
\begin{theorem}
There exists a minimizer of the functional $J_{p,\varepsilon}$.
\end{theorem}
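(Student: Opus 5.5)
We apply the direct method of the calculus of variations on the affine admissible class
\[
K_\sigma=\{u\in W^{1,p}(\Omega):\ u-\sigma\in W^{1,p}_0(\Omega)\},
\]
which is nonempty (it contains $\sigma$) and weakly closed in $W^{1,p}(\Omega)$. Since $J_{p,\varepsilon}(\sigma)<\infty$, we have $m:=\inf_{K_\sigma}J_{p,\varepsilon}<\infty$. The first task is to show $m>-\infty$ and that minimizing sequences are bounded in $W^{1,p}$.

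We use the form (\ref{functional-1}). The term $q(x)\lambda_\varepsilon(u)$ is nonnegative, since $\lambda_\varepsilon\ge\lambda_1>0$ and $q>0$. The only possibly negative term is $u\Theta_\varepsilon(u)$. Because $\Theta_\varepsilon=1-\Gamma_\varepsilon$ vanishes for $s\ge\varepsilon$ and lies in $[0,1]$, we get
\[
u\,\Theta_\varepsilon(u)\ \ge\ -u^- .
\]
Hence
\[
J_{p,\varepsilon}(u)\ \ge\ \frac1p\|\nabla u\|_p^p-\|u\|_{L^1(\Omega)} .
\]
Next, write $u=\sigma+v$ with $v\in W^{1,p}_0$. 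Poincaré's inequality and Hölder's inequality on the bounded domain give
\[
\|u\|_{L^1}\le C\bigl(\|\sigma\|_{W^{1,p}}+\|\nabla v\|_p\bigr)
\quad\text{and}\quad
\|\nabla u\|_p\ge \|\nabla v\|_p-\|\nabla\sigma\|_p .
\]
Since $p>1$, the term $\frac1p\|\nabla v\|_p^p$ dominates the linear term. So $J_{p,\varepsilon}$ is coercive on $K_\sigma$ and bounded below. Consequently, a minimizing sequence $u_k$ has $v_k=u_k-\sigma$ bounded in $W^{1,p}_0$.

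Because $W^{1,p}$ is reflexive for $p>1$, we can extract a subsequence with $u_k\rightharpoonup u_2$ weakly in $W^{1,p}$, $u_k\to u_2$ strongly in $L^p$ (Rellich), and $u_k\to u_2$ a.e. The weak closedness of $u_2-\sigma\in W^{1,p}_0$ gives $u_2\in K_\sigma$. Lower semicontinuity is then checked term by term:
\begin{itemize}
\item The gradient term $\int\frac1p|\nabla u|^p$ is convex and strongly continuous, hence weakly lower semicontinuous.
\item The term $\int u\Theta_\varepsilon(u)$ converges. The integrand $s\mapsto s\Theta_\varepsilon(s)$ is continuous with $|s\Theta_\varepsilon(s)|\le|s|$, so a.e. convergence together with strong $L^1$ convergence lets us apply the generalized dominated convergence theorem (or Vitali).
\item The term $\int q\lambda_\varepsilon(u)$ converges by dominated convergence, since $0\le q\lambda_\varepsilon(u_k)\le\lambda_2^p q$. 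Here I assume $q\in L^1(\Omega)$, which is implicit in $J_{p,\varepsilon}(\sigma)<\infty$; in general, Fatou's lemma on this nonnegative term gives lower semicontinuity without any integrability assumption on $q$.
\end{itemize}
Therefore $J_{p,\varepsilon}(u_2)\le\liminf_k J_{p,\varepsilon}(u_k)=m$, and $u_2$ is a minimizer.

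The main obstacle is the coercivity step. The lower-order term $u\Theta_\varepsilon(u)$ is unbounded below in the negative phase, and we have to check that it grows only linearly. This is exactly the bound $u\Theta_\varepsilon(u)\ge -u^-$ combined with $p>1$. Without it, a minimizing sequence could drift to $-\infty$ in the region where $u<0$. Everything else is routine.
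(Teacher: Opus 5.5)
Your proposal is correct and follows the same direct-method route as the paper: a bounded minimizing sequence in the affine class $\sigma+W^{1,p}_0(\Omega)$, then weak compactness plus a.e.\ convergence, then lower semicontinuity of $J_{p,\varepsilon}$. It is actually more complete than the paper's sketch. Your bound $u\Theta_\varepsilon(u)\ge -u^-$, combined with Poincar\'e, supplies the lower bound and coercivity that the paper only asserts. Your use of weak lower semicontinuity for the convex gradient term, and generalized dominated convergence for $u\Theta_\varepsilon(u)$, replaces the paper's appeal to Fatou's lemma and weak-$*$ $L^\infty_{\mathrm{loc}}$ convergence. That appeal does not apply as written, because $u\Theta_\varepsilon(u)=u$ is unbounded on $\{u\le 0\}$ and $\nabla u_k$ converges only weakly.
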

\begin{pf}
Define
\begin{equation*}
\mathcal{A} = \left\{v\in W^{1,p}(\Omega)\colon v - \sigma \in W^{1,p}_0(\Omega)\right\}
\end{equation*}
and
\begin{equation*}
m = \inf_{v\in\mathcal{A}} J_{p,\varepsilon}(v).
\end{equation*}
Clearly, the set $\{J_{p,\varepsilon}(v)\colon v\in\mathcal{A}\}$ is bounded below. Let $\{u_k\}$ be a minimizing sequence of $J_{p,\varepsilon}$. Then it is a bounded sequence in $W^{1,p}(\Omega)$ as $\Omega$ has finite measure. Alaoglu's Theorem implies that, for a subsequence still denoted by $\{u_k\}$ for convenience, there is a certain function $u\in W^{1,p}(\Omega)$ with $u-\sigma\in W^{1,p}_0(\Omega)$ that satisfies
\begin{enumerate}
\item $\nabla u_k \rightharpoonup \nabla u$ in $L^p(\Omega)$;
\item $u_k\rightarrow u$ a.\,e.\,in $\Omega$; and
\item $u_k\Theta(u_k) + q(x)\lambda_{\epsilon}(u_k)\rightarrow u\Theta(u) + q(x)\lambda_{\epsilon}(u)$ in the weak-$*$ sense in $L^{\infty}_{\mathrm{loc}}(\Omega)$.
\end{enumerate}
Consequently, Fatou's lemma leads to
\begin{equation}
J_{p,\varepsilon}(u)\leq \liminf_{k\rightarrow\infty} J_{p,\varepsilon}(u_k) = m,
\end{equation}
and we are done.
\end{pf}

Besides the two solutions $u_0$ and $u_2$ for relatively small boundary data, the problem (\ref{euler})-(\ref{bdry}) has a  third weak solution of the mountain-pass type, which will be denoted by $u_1$. In essence, the mountain-pass lemma is a way to produce a saddle point solution. The main theorem of this paper is the following one.

\begin{theorem}\label{thirdsolution}
If $\varepsilon << \sigma_m$ and $J_{p,\varepsilon}[u_2] < J_{p,\varepsilon}[u_0]$, then there is
a third weak solution $u_1$ of the boundary value problem (\ref{euler})-(\ref{bdry}). Moreover, $J_{p,\varepsilon}[u_1]
\geq J_{p,\varepsilon}[u_0] + a$ for some $a > 0$ which is independent of $\varepsilon$.
\end{theorem}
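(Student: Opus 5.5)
We will apply the Mountain Pass Theorem to $J_{p,\varepsilon}$ on the affine space $\mathcal{A}=\sigma+W^{1,p}_0(\Omega)$, with the two "valleys" at $u_0$ and $u_2$. Here $\sigma_m=\inf_{\partial\Omega}\sigma$.

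\textbf{Step 1: $u_0$ is a strict local minimizer with an $\varepsilon$-independent barrier.} Since $\sigma\geq\sigma_m>\varepsilon$ on $\partial\Omega$, the comparison principle gives $u_0\geq\sigma_m$ in $\Omega$. Hence $\Theta_\varepsilon(u_0)=0$ and $\lambda_\varepsilon(u_0)=\lambda_2^p$, so
\[
J_{p,\varepsilon}(u_0)=\int_\Omega \tfrac1p|\nabla u_0|^p+q\lambda_2^p .
\]
For $v\in\mathcal{A}$ with $\|v-u_0\|_{W^{1,p}_0}=\delta$ we will use three facts.
\begin{itemize}
\item By the monotonicity of $\xi\mapsto|\xi|^{p-2}\xi$ and the $p$-harmonicity of $u_0$, $\int\frac1p|\nabla v|^p-\frac1p|\nabla u_0|^p\geq c_p\,\Phi(\nabla v-\nabla u_0)$, where $\Phi(\xi)=|\xi|^p$ for $p\ge2$, with the usual modification for $1<p<2$. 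This term is bounded below by $c\,\delta^{\max(p,2)}$-type quantities.
\item The potential terms can decrease only on $\{v<\varepsilon\}$, where $v-u_0\leq\varepsilon-\sigma_m\leq-\sigma_m/2$.
\item On $\{v<\varepsilon\}$ one has $u\Theta_\varepsilon(u)\geq-|u|\chi_{\{u<\varepsilon\}}$ and $q(\lambda_\varepsilon-\lambda_2^p)\geq -q(\lambda_2^p-\lambda_1^p)$.
\end{itemize}
The Sobolev/Chebyshev inequality then bounds $|\{v<\varepsilon\}|\leq|\{|v-u_0|\geq\sigma_m/2\}|\leq C(\delta/\sigma_m)^{p^*}$, and $\int_{\{v<\varepsilon\}}|v|$ is controlled similarly. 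Since $p^*>p$ (or, for $p\geq n$, any large exponent works), the gain dominates the loss for small $\delta$. This gives $J_{p,\varepsilon}(v)\geq J_{p,\varepsilon}(u_0)+a$ on the sphere $\|v-u_0\|=\delta$, with $\delta,a$ depending only on $p,n,\Omega,q,\lambda_i,\sigma_m$ and not on $\varepsilon$, provided $\varepsilon<\sigma_m/2$.

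For $1<p<2$ the gradient gain is not a plain power of $\delta$. I would handle this by splitting the region according to the relative size of $|\nabla v-\nabla u_0|$ and $|\nabla u_0|$. Alternatively, I would bound below with $\int(|\nabla u_0|+|\nabla v-\nabla u_0|)^{p-2}|\nabla(v-u_0)|^2$ and use Hölder. I expect this step to be the main technical obstacle.

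\textbf{Step 2: geometry of the mountain pass.} The hypothesis $J_{p,\varepsilon}(u_2)<J_{p,\varepsilon}(u_0)$ forces $u_2$ to lie outside the ball of radius $\delta$ around $u_0$, because on that sphere, and by the same estimate inside the ball, $J_{p,\varepsilon}\geq J_{p,\varepsilon}(u_0)$. Let $\Gamma$ be the set of continuous paths in $\mathcal{A}$ from $u_0$ to $u_2$ and set
\[
c=\inf_{\gamma\in\Gamma}\max_{t\in[0,1]}J_{p,\varepsilon}(\gamma(t)).
\]
Every path crosses the sphere, so $c\geq J_{p,\varepsilon}(u_0)+a$.

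\textbf{Step 3: regularity of $J_{p,\varepsilon}$ and the Palais–Smale condition.} Since $\Theta_\varepsilon$ and $\lambda_\varepsilon$ are smooth with bounded derivatives and $s\Theta_\varepsilon(s)$ has derivative bounded by $1+C|s|\chi$ on the bounded set $[0,\varepsilon]$, the functional is $C^1$ on $\mathcal{A}$ with the derivative given by (\ref{euler}). The lower-order part is in fact weakly continuous. For a Palais–Smale sequence $u_k$, coercivity gives boundedness in $W^{1,p}$, because $J_{p,\varepsilon}\geq\frac1p\|\nabla u\|_p^p-\int|u|-C$. Extracting a weakly convergent subsequence with strong $L^p$ convergence, we then test $J'(u_k)-J'(u)$ against $u_k-u$. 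The lower-order terms vanish by compactness, and the $(S_+)$ property of $-\Delta_p$ gives $\nabla u_k\to\nabla u$ strongly.

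\textbf{Conclusion.} The Mountain Pass Theorem yields a critical point $u_1$ at level $c$. It is a weak solution of (\ref{euler})--(\ref{bdry}), and $J_{p,\varepsilon}(u_1)=c\geq J_{p,\varepsilon}(u_0)+a>J_{p,\varepsilon}(u_2)$. Hence $u_1$ is distinct from both $u_0$ and $u_2$.
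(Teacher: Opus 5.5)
\textbf{Overall.} Your framework matches the paper's, and one step goes beyond it. You shift to $w=v-u_0\in W^{1,p}_0(\Omega)$, prove Palais--Smale via coercivity, compactness of the lower-order terms and the $(S_+)$ property, and apply the Mountain Pass Theorem to get $c\ge J_{p,\varepsilon}(u_0)+a$. Your coercivity bound also supplies the boundedness of Palais--Smale sequences that the paper only asserts.

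\textbf{Where your route differs.} The difference is the ridge estimate. The paper argues that $\{u\le\varepsilon\}$ is empty on the sphere $\|v\|_{\mathfrak{B}}=\delta$, so only $\int\frac1p|\nabla u|^p-\frac1p|\nabla u_0|^p$ has to be bounded below. You allow that set and pay for it through its measure.

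Yours is the sounder route. For $p\le n$, take a capacitary dip $-K\phi_\rho$ with $\phi_\rho=1$ on $B_\rho(z)$ and $\|\nabla\phi_\rho\|_{L^p}\to 0$. It puts $u\le 0$ on $B_\rho(z)$ at arbitrarily small norm, so emptiness is not available. Your Step 1 compares a gain $\ge c\delta^p$ (for $p\ge2$) or $\gtrsim\delta^2$ (for $p<2$) against a loss $\lesssim\delta^{p^*}$. This is complete for $p\ge 2$ and for $2n/(n+2)<p<2$, hence for all $p>1$ when $n\le 2$.

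\textbf{The gap.} It occurs for $n\ge 3$ and $1<p\le 2n/(n+2)$, where $p^*\le 2$. There your proposed splitting and H\"older devices only yield a global gain of order $\delta^2(\|\nabla u_0\|_{L^p}+\delta)^{p-2}$. This is exactly the paper's bound $C(p)A(u_0)\delta^2$, and it is attained by spread-out perturbations. It does not dominate a loss of order $\delta^{p^*}$. So ``since $p^*>p$'' is the wrong comparison when $p<2$; the relevant one is $p^*$ against $2$.

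\textbf{The missing idea: localize the gain to where the loss is produced.}
\begin{itemize}
\item With $w=v-u_0$, the loss lives on $E=\{w\le-\sigma_m/2\}$. It forces $w$ to drop by $\sigma_m/4$ across the layer $\{-\sigma_m/2<w<-\sigma_m/4\}$.
\item If $|\nabla u_0|\le M$ (from $C^{1,\alpha}$ regularity up to the boundary for smooth data), then by (\ref{ele4}) the gain density is at least $c\,\Psi(|\nabla w|)$ with $\Psi(t)=t^2(M+t)^{p-2}$, which is convex.
\item The coarea formula, the isoperimetric inequality for the level sets, and Jensen's inequality on each level set then give a gain at least $c\min\{M^{p-2}\sigma_m^2|E|^{(n-2)/n},\,\sigma_m^p|E|^{(n-p)/n}\}$.
\item This beats $C|E|$, because $|E|\le C(\delta/\sigma_m)^{p^*}$ is small.
\item The term $\int_{\{u<0\}}|u|\le\int(-w-\sigma_m)^+$ needs the same localized treatment, via Sobolev applied to $(-w-\sigma_m)^+$.
\end{itemize}
Without an argument of this kind, the proposal does not reach the full range $1<p<\infty$ that the theorem claims.
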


Let $\sigma_M = \max_{\partial\Omega}\sigma(x)$ and $\sigma_m = \min_{\partial\Omega}\sigma(x)$. We first prove that if $\sigma_M$ is sufficiently small, then $J_{p,\varepsilon}[u_2] < J_{p, \varepsilon}[u_0]$ and hence $u_0\neq u_2$. In fact, we may pick $u\in W^{1,p}(\Omega)$ so that
\begin{equation*}
\left\{\begin{array}{ll}
u = 0 &\ \text{\ in\ }\overline{\Omega}_{\delta}\\
u = \sigma &\ \text{\ on\ }\partial\Omega,\ \text{and}\\
-\Delta_p u = 0 &\ \text{\ in\ }\Omega\backslash\overline{\Omega}_{\delta},
\end{array}\right.
\end{equation*}
where $\Omega_{\delta} = \{x\in\Omega: dist(x, \partial\Omega) > \delta\}$ and $\delta > 0$
is a small constant independent of $\varepsilon$ and $\sigma$ so that the integral $\int_{\Omega_{\delta}}
q(x)dx > 0$ and its value is also independent of $\varepsilon$ and $\sigma$. Without the loss of generality, we may assume $\Omega_{\delta}$ is smooth,
since otherwise we may approximate it with a smooth domain in the argument. Then,
\begin{equation*}
J_{p,\varepsilon}(u_0) \geq \int_{\Omega}\frac{1}{p}|\nabla u_0|^p + q(x)\lambda^p_2\,dx
\geq \int_{\Omega}q(x)\lambda^p_2\,dx.
\end{equation*}
As $|\nabla u| \leq C\frac{\sigma_M}{\delta}$ in $\Omega\backslash\overline{\Omega}_{\delta}$ from the gradient estimates (e.\,g.\,Chapter 4, \cite{LU} or Chapter 14, \cite{GT}) and $u\Theta_{\varepsilon}(u)\leq\varepsilon$ for $u\geq 0$, we know that
\begin{alignat*}{1}
&J_{p,\varepsilon}(u) = \int_{\Omega}\frac{1}{p}|\nabla u|^p + u\Theta_{\varepsilon}(u) + q(x)\lambda_{\varepsilon}(u)\,dx\\
&\leq \int_{\Omega}\frac{1}{p}|\nabla u|^p + \varepsilon|\Omega| + \int_{\Omega\backslash\Omega_{\delta}}q(x)\lambda^p_2 + \int_{\Omega_{\delta}}q(x)\lambda^p_1\\
&\leq C\int_{\Omega\backslash \Omega_{\delta}}\frac{\sigma^p_M}{\delta^p} + \varepsilon|\Omega| + \int_{\Omega\backslash\Omega_{\delta}}q(x)\lambda^p_2 + \int_{\Omega_{\delta}}q(x)\lambda^p_1.
\end{alignat*}
So, for all small $\varepsilon > 0$,
\begin{equation*}
J_{p,\varepsilon}(u) - J_{p,\varepsilon}(u_0)
\leq C\int_{\Omega\backslash\Omega_{\delta}}\frac{\sigma^p_M}{\delta^p} + \varepsilon|\Omega| - \int_{\Omega_{\delta}}q(x)\left(\lambda^p_2 - \lambda^p_1\right)\,dx < 0
\end{equation*}
if $\sigma_M\leq \sigma_0$ for some $\sigma_0 = \sigma_0(\delta, \Omega, q)$ small enough. In particular, $J_{p,\varepsilon}(u_2) \leq J_{p,\varepsilon}(u) < J_{p,\varepsilon}(u_0)$, and hence $u_2\neq u_0$.

Let $\mathfrak{B}$ denote the Banach space $W^{1,p}_0(\Omega)$ that we will work with. For every $v\in\mathfrak{B}$, we write $u = v + u_0$ and adopt the norm $\|v\|_{\mathfrak{B}} = \left(\int_{\Omega}|\nabla v|^p\right)^{\frac{1}{p}} = \left(\int_{\Omega}|\nabla u - \nabla u_0|^p\right)^{\frac{1}{p}}$. We define the functional
\begin{equation}
I[v] = J_{p,\varepsilon}(u) - J_{p,\varepsilon}(u_0) = \int_{\Omega}\frac{1}{p}|\nabla u|^p + u\Theta_{\varepsilon}(u) - \int_{\{u < \varepsilon\}}q(x)\left(\lambda^p_2 - \lambda_{\varepsilon}(u)\right) - \int_{\Omega}\frac{1}{p} |\nabla u_0|^p
\end{equation}
Set $v_2 = u_2 - u_0$. Clearly, $I[0] = 0$ and $I[v_2] \leq 0$ on account of the definition of $u_2$ as a minimizer of $J_{p,\varepsilon}$. If $I[v_2] < 0$ which is the case when $\sigma_M$ is small, we will apply the Mountain Pass Lemma to prove that there exists a critical point of the functional $I$ which is a weak solution of the problem (\ref{euler})-(\ref{bdry}).

The Fr\'{e}chet derivative of $I$ at $v = u - u_0\in \mathfrak{B}$ is given by
\begin{equation}
I'[v]\varphi = \int_{\Omega}|\nabla u|^{p-2}\nabla u\cdot\nabla\varphi + \left(u\theta_{\varepsilon}(u) + \Theta_{\varepsilon}(u) + q(x)\mu_{\varepsilon}(u)\right)\varphi,\ \ \ \varphi\in\mathfrak{B}
\end{equation}
which is obviously in the dual space $\mathfrak{B}^*$ of $\mathfrak{B}$ in light of the H\"{o}lder's inequality. Equivalently
\begin{equation}
I'[v] = -\Delta_p (v+u_0) + (v+u_0)\theta_{\varepsilon}(v+u_0) + \Theta_{\varepsilon}(v+u_0) + q(x)\mu_{\varepsilon}(v+u_0)\in\mathfrak{B}^*.
\end{equation}

To prove $I'$ is Lipschitz continuous on any bounded subset of $\mathfrak{B}$ with Lipschitz constant depending on $\varepsilon$, $p$, $|\Omega|$, and $\sup q$, we follow the argument in \cite{HW}. In fact, for any $v$, $w$, and $\varphi\in \mathfrak{B}$,
\begin{equation}
\left|I'[v]\varphi - I'[w]\varphi\right| \leq D_1 + D_2,
\end{equation}
where
\begin{equation}
D_1 = \left|\int_{\Omega}\left|\nabla v + \nabla u_0\right|^{p-2}(\nabla v + \nabla u_0)\cdot\nabla\varphi - \left|\nabla w + \nabla u_0\right|^{p-2}(\nabla w + \nabla u_0)\cdot\nabla\varphi\right|
\end{equation}
and
\begin{equation}
\begin{split}
D_2 = &\left|\int_{\Omega}\left[(v+u_0)\theta_{\varepsilon}(v+u_0)+\Theta_{\varepsilon}(v+u_0)+q(x)\mu_{\varepsilon}(v+u_0)\right]\varphi\right.\\
&\left. - \left[(w+u_0)\theta_{\varepsilon}(w+u_0)+\Theta_{\varepsilon}(w+u_0)+q(x)\mu_{\varepsilon}(w+u_0)\right]\varphi\right|.
\end{split}
\end{equation}
$D_1$ is estimated as in \cite{HW}
\begin{equation}
D_1 \leq C(p)\left(\|\nabla v\|_{L^p} + \|\nabla w\|_{L^p} + \|\nabla u_0\|_{L^p}\right)^{p-2}\|\nabla v - \nabla w\|_{L^p(\Omega)}\|\nabla\varphi\|_{L^p(\Omega)}.
\end{equation}
On the other hand, it is not difficult to get from the Young's inequality that
\begin{equation}
D_2 \leq C(1/\varepsilon,p,|\Omega|,\sup q)\|v-w\|_{L^p(\Omega)}\|\varphi\|_{L^p(\Omega)}.
\end{equation}
These estimates readily imply the Lipschitz continuity of $I'$ on bounded subsets of $\mathfrak{B}$.

Next we justify the Palais-Smale condition on the functional $I$. Suppose $\{v_k\}\subset\mathfrak{B}$ is a Palais-Smale sequence in the sense that
\begin{equation*}
\left|I[v_k]\right|\leq M\ \ \ \ \text{and\ \ }\ \ I'[v_k]\rightarrow 0\ \ \ \ \text{in $\mathfrak{B}^*$}
\end{equation*}
for some $M > 0$. Let $u_k = v_k + u_0\in W^{1,p}(\Omega)$, $k = 1, 2, 3, ...$.

We firstly note that for $v\in W^{1,p}_0(\Omega)$ and $u = v + u_0$, the functions $\theta_{\varepsilon}(u)$, $\Theta_{\varepsilon}(u)$, and $\mu_{\varepsilon}(u)$ are in $W^{1,p}_0(\Omega)$ by their definition. Then, since $u\theta_{\varepsilon}(u) + \Theta_{\varepsilon}(u) + q(x)\mu_{\varepsilon}(u)\in W^{1,p}_0(\Omega)$ with $u = v + u_0$ and $v\in W^{1,p}(\Omega)$, the Rellich-Kondrachov Compactness Theorem that states $W^{1,p}_0(\Omega)\subset\subset L^p(\Omega)\subset \mathfrak{B}^*$ implies the mapping $v\mapsto u\theta_{\varepsilon}(u) + \Theta_{\varepsilon}(u) + q(x)\mu_{\varepsilon}(u)$ from $W^{1,p}_0(\Omega)$ to $\mathfrak{B}^*$ is compact. Consequently, there exists an $f\in L^p(\Omega)\subset\mathfrak{B}^*$ such that, restricted to a subsequence of $\{u_k := v_k + u_0\}$ if necessary,
\begin{equation*}
u_k\theta_{\varepsilon}(u_k) + \Theta_{\varepsilon}(u_k) + q(x)\mu_{\varepsilon}(u_k)\rightarrow -f\ \ \text{ in $L^p(\Omega)$.}
\end{equation*}
Recall that
\begin{equation*}
\begin{split}
&\ \ \ \ \left\|I'[v_k]\right\| \\
&= \sup_{\|\varphi\|_{\mathfrak{B}}\leq 1}\left|\int_{\Omega}|\nabla u_k|^{p-2}\nabla u_k\cdot\nabla\varphi + \left(u_k\theta_{\varepsilon}(u_k) + \Theta_{\varepsilon}(u_k) + q(x)\mu_{\varepsilon}(u_k)\right) \varphi\right|\rightarrow 0.
\end{split}
\end{equation*}
As a consequence,
\begin{equation}\label{test1}
\sup_{\|\varphi\|_{\mathfrak{B}}\leq M}\left|\int_{\Omega}|\nabla u_k|^{p-2}\nabla u_k\cdot\nabla \varphi - f\varphi\right| \rightarrow 0\ \ \ \ \text{for any $M\geq 0$.}
\end{equation}
Obviously, that $\{I[v_k]\}$ is bounded implies that a subsequence of $\{v_k\}$, still denoted by $\{v_k\}$ by abusing the notation without confusion, converges weakly in $\mathfrak{B} = W^{1, p}_0(\Omega)$. In particular,
\begin{equation*}
\int_{\Omega}fv_k - fv_m\rightarrow 0\ \ \ \ \text{as $k$, $m\rightarrow\infty$.}
\end{equation*}
Then by setting $\varphi = v_k - v_m = u_k - u_m$ in (\ref{test1}), one gets
\begin{equation}\label{conv}
\left|\int_{\Omega}\left(|\nabla u_k|^{p-2}\nabla u_k - |\nabla u_m|^{p-2}\nabla u_m\right)\cdot \nabla (u_k - u_m)\right| \rightarrow 0\ \ \ \ \text{as $k$, $m\rightarrow\infty$,}
\end{equation}
since
\begin{equation*}
\|u_k - u_m\|^p_{\mathfrak{B}} = \|v_k - v_m\|^p_{\mathfrak{B}} \leq 2pM + 2J_p[u_0].
\end{equation*}
In particular, if $p = 2$, $\{v_k\}$ is a Cauchy sequence in $W^{1,2}_0(\Omega)$ and hence converges.
We will apply the following elementary inequalities associated with the $p$-Laplacian, \cite{L}, to the general case $p\neq 2$:
\begin{alignat}{1}
&\langle |b|^{p-2}b - |a|^{p-2}a,\,b - a\rangle \geq (p-1)|b-a|^2(1 + |a|^2 + |b|^2)^{\frac{p-2}{2}},\ \ 1\leq p\leq 2;\label{ele1}\\
&\text{and}\ \ \ \ \langle |b|^{p-2}b - |a|^{p-2}a,\,b - a\rangle \geq 2^{2-p}|b-a|^p,\ \ p\geq 2.\label{ele2}
\end{alignat}
We assume first $1 < p < 2$. Let $K = 2pM + 2J_p[u_0]$. Then the first elementary inequality (\ref{ele1}) implies
\begin{alignat*}{1}
&\ \ \ \ \ (p-1)\int_{\Omega}|\nabla u_k - \nabla u_m|^2\left(1+|\nabla u_k|^2 + |\nabla u_m|^2\right)^{\frac{p-2}{2}}\\ &\leq \int_{\Omega}\left(|\nabla u_k|^{p-2}\nabla u_k - |\nabla u_m|^{p-2}\nabla u_m\right)\cdot \nabla (u_k - u_m) \rightarrow 0
\end{alignat*}
Meanwhile H\"{o}lder's inequality implies
\begin{alignat*}{1}
&\ \ \ \ \ \int_{\Omega}|\nabla v_k - \nabla v_m|^p = \int_{\Omega}|\nabla u_k - \nabla u_m|^p \\
&\leq \left(\int_{\Omega}|\nabla u_k - \nabla u_m|^2\left(1 + |\nabla u_k|^2 + |\nabla u_m|^2\right)^{\frac{p-2}{2}}\right)^{\frac{p}{2}}
\left(\int_{\Omega}\left(1 + |\nabla u_k|^2 + |\nabla u_m|^2\right)^{\frac{p}{2}}\right)^{\frac{2-p}{2}} \\
&\leq C(p)\left(|\Omega| + K\right)^{\frac{2-p}{2}} \left(\int_{\Omega}|\nabla u_k - \nabla u_m|^2\left(1 + |\nabla u_k|^2 + |\nabla u_m|^2\right)^{\frac{p-2}{2}}\right)^{\frac{p}{2}}
\end{alignat*}
Therefore, $\{v_k\}$ is a Cauchy sequence in $\mathfrak{B}$ and hence converges.

Suppose $p > 2$. The second elementary inequality (\ref{ele2}) implies
\begin{alignat*}{1}
&\ \ \ \ \ \int_{\Omega}|\nabla v_k - \nabla v_m|^p = \int_{\Omega}|\nabla u_k - \nabla u_m|^p \\
&\leq 2^{p-2}\int_{\Omega}\left(|\nabla u_k|^{p-2}\nabla u_k - |\nabla u_m|^{ p-2}\nabla u_m\right)\cdot \left(\nabla u_k - \nabla u_m\right),
\end{alignat*}
which in turn implies $\{v_k\}$ is a Cauchy sequence in $\mathfrak{B}$ and hence converges, on account of (\ref{conv}). The Palais-Smale condition is verified for $1 < p < \infty$ for the functional $I$ on the Banach space $W^{1,p}_0(\Omega)$.

We will apply in the main proof the following lemma that follows from the Fundamental Theorem of Calculus.
\begin{lemma}\label{p-inequalities}
For any $a$ and $b\in\mathbb{R}^n$, it holds
\begin{equation}\label{ele3}
|b|^p \geq |a|^p + p\langle |a|^{p-2}a, b-a\rangle +\, C(p)|b - a|^p\ \ \ \ (p\geq 2)
\end{equation}
where $C(p) > 0$.

If $1 < p < 2$, then
\begin{equation}\label{ele4}
|b|^p \geq |a|^p + p\langle |a|^{p-2}a, b-a\rangle  +\, C(p)|b-a|^2\int^1_0\int^t_0\left|(1-s)a+sb\right|^{p-2}\,dsdt,
\end{equation}
where $C(p) = p(p-1)$.
\end{lemma}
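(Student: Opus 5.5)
Both inequalities come from Taylor's formula with integral remainder applied to $g(t) = |a + t(b-a)|^p$ on $[0,1]$. Set $h = b-a$ and $\gamma(s) = (1-s)a + sb$, and let $F(\xi) = |\xi|^p$. Then $\nabla F(\xi) = p|\xi|^{p-2}\xi$ and
\begin{equation*}
D^2F(\xi) = p|\xi|^{p-2}\left(I + (p-2)\frac{\xi\otimes\xi}{|\xi|^2}\right)
\end{equation*}
for $\xi \neq 0$. We have $g(0) = |a|^p$, $g'(0) = p\langle |a|^{p-2}a, h\rangle$ and $g''(s) = \langle D^2F(\gamma(s))h, h\rangle$. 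The identity $g(1) = g(0) + g'(0) + \int_0^1\int_0^t g''(s)\,ds\,dt$ therefore reduces both claims to lower bounds on $g''$.

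\textbf{Case $1<p<2$.} The eigenvalues of $D^2F(\xi)$ are $p|\xi|^{p-2}$ (tangential directions) and $p(p-1)|\xi|^{p-2}$ (radial direction). Since $p-1<1$, we get $\langle D^2F(\xi)h,h\rangle \ge p(p-1)|\xi|^{p-2}|h|^2$, which gives (\ref{ele4}) with $C(p) = p(p-1)$.

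There is a technical point: $g$ may fail to be $C^2$ where $\gamma(s) = 0$. This happens at most at one value of $s$ unless $a$ and $b$ both vanish, a trivial case. Since $p-2>-1$, the singularity $|\gamma(s)|^{p-2} \sim |s-s_0|^{p-2}$ is integrable. I would therefore either justify Taylor's formula for $g \in C^1$ with $g'$ absolutely continuous, or perturb $a$ slightly off the line through the origin and pass to the limit by Fatou's lemma.

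\textbf{Case $p\ge2$.} The minimal eigenvalue is now $p|\xi|^{p-2}$. This gives
\begin{equation*}
|b|^p - |a|^p - p\langle|a|^{p-2}a,h\rangle \ge p|h|^2\int_0^1\int_0^t|\gamma(s)|^{p-2}\,ds\,dt = p|h|^2\int_0^1(1-s)|\gamma(s)|^{p-2}\,ds.
\end{equation*}
Here the singularity is harmless because $F \in C^2$. What remains, and what I expect to be the main step, is the bound
\begin{equation*}
\int_0^1(1-s)|a+sh|^{p-2}\,ds \ge c(p)|h|^{p-2}.
\end{equation*}
I would prove it by a geometric argument. Let $s^*$ be the minimizer of $|a+sh|$ over $s\in\mathbb{R}$. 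Then $|a+sh| \ge |s-s^*|\,|h|$ for all $s$, by the Pythagorean decomposition along $h$. So the integral is at least $|h|^{p-2}\int_0^1(1-s)|s-s^*|^{p-2}\,ds$.

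The function $s^* \mapsto \int_0^1(1-s)|s-s^*|^{p-2}\,ds$ is continuous and strictly positive. It tends to $+\infty$ as $|s^*|\to\infty$ when $p>2$, and it is identically $1/2$ when $p=2$. Hence it has a positive infimum $c(p)$. This yields (\ref{ele3}) with $C(p) = p\,c(p)$. For instance, splitting $[0,1]$ and bounding $|s-s^*|\ge$ the distance to a half-interval gives an explicit constant of order $4^{-(p-1)}p$.
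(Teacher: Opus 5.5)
Your argument is correct and is essentially the paper's route. The paper only remarks that the lemma follows from the Fundamental Theorem of Calculus, i.e.\ the second-order Taylor expansion with integral remainder along $s\mapsto(1-s)a+sb$, which is exactly where the double integral $\int_0^1\int_0^t$ in (\ref{ele4}) comes from; your Hessian eigenvalue bounds, the absolute-continuity justification for $1<p<2$, and the estimate $|a+sh|\ge|s-s^*|\,|h|$ giving the $|b-a|^p$ term for $p\ge 2$ supply the details the paper leaves out.
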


Back to the main proof, we are now in a position to show there is a closed mountain ridge around the origin of the Banach space $\mathfrak{B}$ that separates $v_2$ from the origin with the energy $I$ as the elevation function. This is the content of the following lemma.
\begin{lemma}
For all small $\varepsilon > 0$ such that $C\varepsilon \leq \frac{1}{2}\sigma_m$ for a large universal constant $C$, there exist positive constants $\delta$ and $a$ independent of $\varepsilon$, such that, for every $v$ in $\mathfrak{B}$ with $\|v\|_{\mathfrak{B}} = \delta$, the inequality $I[v] \geq a$ holds.
\end{lemma}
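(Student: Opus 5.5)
The plan is to decompose $I[v]$ into a gradient part, which is coercive in $\|v\|_{\mathfrak{B}}$, and a lower-order part, which only feels the set where $u=v+u_0$ drops below $\varepsilon$. That set forces $|v|$ to be of size comparable to $\sigma_m$, so Sobolev embedding makes the lower-order part of higher order in $\|v\|_{\mathfrak{B}}$. Writing $u=v+u_0$, we have
\[
I[v]=\frac1p\int_\Omega\bigl(|\nabla u|^p-|\nabla u_0|^p\bigr)+\int_\Omega u\Theta_\varepsilon(u)-\int_{\{u<\varepsilon\}}q(x)\bigl(\lambda_2^p-\lambda_\varepsilon(u)\bigr)=:T_1+T_2-T_3 .
\]

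\textbf{Step 1 (gradient term).} Apply Lemma~\ref{p-inequalities} with $a=\nabla u_0$ and $b=\nabla u$. The linear term $p\int_\Omega|\nabla u_0|^{p-2}\nabla u_0\cdot\nabla v$ vanishes, because $u_0$ is weakly $p$-harmonic and $v\in W^{1,p}_0(\Omega)$. For $p\ge2$, (\ref{ele3}) gives $T_1\ge \frac{C(p)}{p}\|v\|_{\mathfrak{B}}^p$. For $1<p<2$, use $|(1-s)a+sb|^{p-2}\ge(|a|+|b|)^{p-2}$ in (\ref{ele4}) to get
\[
T_1\ge c\int_\Omega|\nabla v|^2\bigl(|\nabla u_0|+|\nabla u|\bigr)^{p-2}.
\]
Then, exactly as in the Palais--Smale argument, H\"older's inequality with the bound $\|\nabla u\|_{L^p}\le\|\nabla u_0\|_{L^p}+1$ (valid for $\delta\le1$) yields $T_1\ge c_0\|v\|_{\mathfrak{B}}^2$. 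Here $c_0$ depends only on $p$, $|\Omega|$ and $\|\nabla u_0\|_{L^p}$, and in particular not on $\varepsilon$. Write $\gamma=\max(p,2)$, so that $T_1\ge c_0\|v\|_{\mathfrak{B}}^{\gamma}$ for $\|v\|_{\mathfrak{B}}\le1$.

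\textbf{Step 2 (lower-order terms).} By the weak maximum principle, $u_0\ge\sigma_m$ in $\Omega$. Since $C\varepsilon\le\frac12\sigma_m$, on $\{u<\varepsilon\}$ we have $v<\varepsilon-\sigma_m\le-\sigma_m/2$. Hence
\[
T_3\le \sup q\,(\lambda_2^p-\lambda_1^p)\,\bigl|\{|v|\ge\sigma_m/2\}\bigr| .
\]
Also $u\Theta_\varepsilon(u)\ge0$ for $u\ge0$, while for $u<0$ we have $u\Theta_\varepsilon(u)=u\ge v$. Therefore
\[
T_2\ge-\int_{\{|v|\ge\sigma_m\}}|v|\ \ge\ -\|v\|_{L^s}\,\bigl|\{|v|\ge\sigma_m\}\bigr|^{1-1/s}.
\]
Now use Chebyshev's inequality with the Sobolev exponent $s=p^*$ (any finite $s$ if $p\ge n$) together with the Sobolev inequality $\|v\|_{L^s}\le C_S\|v\|_{\mathfrak{B}}$. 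This gives
\[
T_3+|T_2|\le C_1\,\sigma_m^{-s}\|v\|_{\mathfrak{B}}^{s},
\]
with $C_1$ independent of $\varepsilon$.

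\textbf{Step 3 (choice of $\delta$ and $a$).} If $s>\gamma$, choose $\delta\le1$ so small that $C_1\sigma_m^{-s}\delta^{s-\gamma}\le c_0/2$. Then $I[v]\ge \frac{c_0}{2}\delta^{\gamma}=:a$ whenever $\|v\|_{\mathfrak{B}}=\delta$. Both $\delta$ and $a$ depend only on $p,n,\Omega,q,\lambda_i,\sigma_m$ and $u_0$, and not on $\varepsilon$.

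\textbf{Main obstacle.} For $p\ge2$, the needed inequality $s>\gamma=p$ holds trivially. The delicate case is $1<p<2$ with $p^*\le2$, i.e.\ $p\le 2n/(n+2)$. There the crude quadratic bound of Step~1 is not of lower order than the Sobolev bound. My first attempt is to sharpen Step~1 pointwise. On $\{|\nabla v|\ge|\nabla u_0|\}$ one has $|\nabla v|^2(|\nabla u_0|+|\nabla u|)^{p-2}\ge c|\nabla v|^p$. On the complementary set, the weight is bounded below by $(3\sup|\nabla u_0|)^{p-2}$, using the Lipschitz bound on $u_0$ that holds for smooth $\sigma$ and $\partial\Omega$ via the gradient estimates of \cite{LU}. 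Combining the two regions gives $T_1\ge c\min\{\|v\|_{\mathfrak{B}}^p,\ \|v\|_{W^{1,2}}^2\}$-type control, namely $T_1\ge c\,\|v\|_{\mathfrak{B}}^p$ for small $\delta$. With $\gamma=p<p^*=s$ the argument of Step~3 then goes through.
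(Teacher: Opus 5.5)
Your Steps 1--3 are correct whenever $s>\gamma$, that is, for $p\ge 2$ and for $2n/(n+2)<p<2$. In that range they give a genuinely different proof from the paper's.

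The paper argues that $\{u\le\varepsilon\}$ is \emph{empty} once $\|v\|_{\mathfrak{B}}=\delta$ is small, using a path-integration argument from $\mathcal{DA}(\partial\Omega)$. Then $I[v]$ reduces to the gradient term, which it bounds below by $C\delta^p$ for $p\ge2$ and by $C(p)A(u_0)\delta^2$ for $1<p<2$; these are the same exponents as your Step~1. You instead keep the lower-order terms and show by Chebyshev and Sobolev that they are $O(\|v\|_{\mathfrak{B}}^{s})$. Your route is the more robust one. For $p\le n$ there are $v\in C^\infty_0(\Omega)$ with arbitrarily small $\|\nabla v\|_{L^p}$ that equal $-2\sigma_M$ on a small ball, since small balls have small $p$-capacity. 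So emptiness of $\{u\le\varepsilon\}$ is only available via Morrey's embedding, for $p>n$, and you cannot borrow it to fill your remaining case.

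That remaining case, $1<p\le 2n/(n+2)$ (so $n\ge 3$), is a genuine gap. Your repair asserts $T_1\ge c\|v\|_{\mathfrak{B}}^p$ for small $\delta$, and this is false whenever $u_0$ is not constant. On $\{|\nabla v|<|\nabla u_0|\}$ your pointwise bound is $cL^{p-2}|\nabla v|^2$. Where $|\nabla v|$ is small, $|\nabla v|^2\ll|\nabla v|^p$, so in your minimum it is the quadratic piece that is smaller.

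Concretely, take $v=t\phi$ with $\phi\in C^\infty_0$ supported where $|\nabla u_0|\ge c>0$, and set $G(\xi)=\frac1p|\xi|^p$. Taylor expansion gives $T_1=\frac{t^2}{2}\int_\Omega\langle D^2G(\nabla u_0)\nabla\phi,\nabla\phi\rangle\,dx+O(t^3)$. So $T_1$ is comparable to $\|v\|_{\mathfrak{B}}^2\ll\|v\|_{\mathfrak{B}}^p$, and $\gamma=2$ is sharp for any global lower bound. With $p^*\le 2$, your bound $T_3+|T_2|\le C\|v\|_{\mathfrak{B}}^{p^*}$ is then not of lower order, and Step~3 fails.

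The two bounds are never sharp at the same time. The negative terms reach size $\|v\|_{\mathfrak{B}}^{p^*}$ only when $v$ concentrates, and then $|\nabla v|\gg|\nabla u_0|$ where $v$ crosses from $-\sigma_m/4$ to $-\sigma_m/2$. What is missing is therefore a \emph{localized} estimate.

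Write $T_1=\int_\Omega F$ with $F=\frac1p(|\nabla u_0+\nabla v|^p-|\nabla u_0|^p)-|\nabla u_0|^{p-2}\nabla u_0\cdot\nabla v\ge 0$, and note $F\ge c\min\{|\nabla v|^p,L^{p-2}|\nabla v|^2\}$. You need an isocapacitary bound $T_1\ge c\,\mu^{(n-p)/n}$ for $\mu=|\{v\le-\sigma_m/2\}|$, applied to the truncation $\min\{(-v-\sigma_m/4)^+,\sigma_m/4\}$.

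H\"older plus the global Sobolev inequality only reproduces the threshold $p>2n/(n+2)$. P\'olya--Szeg\H{o} applied to a convex minorant of $\min\{t^p,t^2\}$, followed by a radial computation, does give the bound. Then $T_3\le C\mu\ll T_1$, and $T_2$ is handled the same way through $(-v-\sigma_m/2)^+$.
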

\begin{pf}
It suffices to prove $I[v] \geq a > 0$ for every $v\in C^{\infty}_0(\Omega)$ with $\|v\|_{\mathfrak{B}} = \delta$ for $\delta$ small enough, as $I$ is continuous on $\mathfrak{B}$, and $C^{\infty}_0(\Omega)$ is dense in $\mathfrak{B}$.

For such a function $v$, define $u = v + u_0$ and the set $\Lambda = \Lambda_{\varepsilon} = \{u\leq\varepsilon\}$ which will be shown to be empty for sufficiently small $\delta$, albeit $\delta \gg \varepsilon$. Let $\mathcal{AC}([0,1],S)$, where $S\subseteq\mathbb{R}^n$, be the set of absolutely continuous functions $\gamma: [0,1]\rightarrow S$. For each $\gamma\in\mathcal{AC}([0,1],S)$, its length is defined as $L(\gamma) = \int^1_0| \gamma'(t)|\,dt$. For each $x_0\in\partial\Omega$, we define the distance from $x_0$ to $\Lambda$ to be
\begin{equation}
d(x_0,\Lambda) = \inf\{L(\gamma):\ \gamma\in\mathcal{AC}([0,1],\overline{\Omega}),\ \text{s.\,t.\,}\gamma(0) = x_0,\ \text{and\ }\gamma(1)\in\Lambda\}
\end{equation}
It was proved in \cite{CW} and \cite{CHRTW} that there is a minimizing path $\gamma$ in $\mathcal{AC}([0,1],\overline{\Omega})$ that realizes the distance $d(x_0,\Lambda)$.

Assume $\Lambda \neq \emptyset$. We replace $\Lambda$ by a small ball on which $u(x) \leq C\varepsilon \ll\sigma_m$. We still denote this set by $\Lambda$ and let $z$ be its center. Further, we define the \textbf{directly accessible boundary} as the set
\begin{equation*}
\mathcal{DA}(\partial\Omega) = \left\{x_0\in\partial\Omega: \exists \text{\ a minimizing $\gamma$ of $d(x_0,\Lambda)$ satisfying $\gamma(t)\in\Omega\backslash\Lambda$ for $t\in(0,1)$.}\right\}
\end{equation*}
We note that for any $x_0\in\mathcal{DA}(\partial\Omega)$ a minimizing path of $d(x_0,\Lambda)$ is a straight line segment. In fact, the minimizing path would be a polygonal curve with endpoints of each segment belonging to $\partial\Omega\cup\Lambda$ otherwise, and hence a contradiction occurs if the path consists of more than one segment.

Let $S$ be the union of the minimizing paths for all the points in the directly accessible boundary $\mathcal{DA}(\partial\Omega)$ and $\Omega_1$ be the interior of $S$. It is evident that $|\Omega_1| = H^n(\Omega_1) > 0$ and $H^{n-1}(\mathcal{DA}(\partial\Omega)) > 0$, since there is one line that passes through any given point on $\partial\Lambda$ from the center of $\Lambda$ to an accessible boundary point on $\mathcal{DA}(\partial\Omega)$. Furthermore, these straight paths reach every point on $\mathcal{DA}(\partial\Omega)$. Clearly, any two such line segment paths do not intersect in $\Omega\backslash\Lambda$. Let $\gamma = \gamma_{x_0}$ be such a path with $\gamma(0) = x_0\in\mathcal{DA}(\partial\Omega)$ and $\gamma(1)\in\Lambda$. Then $v(x_0) = 0$ and $v(\gamma(1)) \leq C\varepsilon - u_0(\gamma(1)) \leq C\varepsilon - \sigma_m < 0$. So, the Fundamental Theorem of Calculus $$v(\gamma(1)) - v(\gamma(0)) = \int^1_0\nabla v(\gamma(t))\cdot\gamma'(t)\,dt$$ implies that
\begin{equation}\label{2.17}
\sigma_m - C\varepsilon \leq \int^1_0|\nabla v(\gamma(t))|\,|\gamma'(t)|\,dt.
\end{equation}
For each $x_0\in\mathcal{DA}(\partial\Omega)$, let $e(x_0)$ be the unit vector in the direction of $x_0 - z$ and $\nu(x_0)$ the outer normal to $\partial\Omega$ at $x_0$. Then $\nu(x_0)\cdot e(x_0) > 0$ everywhere on $\mathcal{DA}(\partial\Omega)$, since the segment $\overline{x_0z}\subset\Omega$. Hence the above inequality (\ref{2.17}) implies
\begin{equation*}
\begin{split}
&\ \ \ \ \ \ (\sigma_m - C\varepsilon)\int_{\mathcal{DA}(\partial\Omega)}\nu(x_0)\cdot e(x_0)\,dH^{n-1}(x_0)\\
&\leq \int_{\mathcal{DA}(\partial\Omega)}\int^1_0|\nabla v(\gamma(t))| |\gamma'(t)|\,dt\,\nu(x_0)\cdot e(x_0)\,dH^{n-1}(x_0) \\
&\leq \int_{\mathcal{DA}(\partial\Omega)}\left(\int^1_0|\gamma'(t)|\,dt\right)^{\frac{1}{p'}}\left(\int^1_0|\nabla v(
\gamma(t))|^p|\gamma'(t)|\,dt\right)^{\frac{1}{p}}\nu(x_0)\cdot e(x_0)\,dH^{n-1}(x_0),\\
&= \int_{\mathcal{DA}(\partial\Omega)} L(\gamma_{x_0})^{\frac{1}{p'}}\left(\int^1_0|\nabla v(\gamma(t))|^p|\gamma'(t)|
\,dt\right)^{\frac{1}{p}}\nu(x_0)\cdot e(x_0)\,dH^{n-1}(x_0) \\
&\leq \left(\int_{\mathcal{DA}(\partial\Omega)}L(\gamma_{x_0})\nu(x_0)\cdot e(x_0)\,dH^{n-1}\right)^{\frac{1}{p'}}\left(\int_{\mathcal{DA}(\partial\Omega)}
\int^1_0|\nabla v(\gamma(t))|^p|\gamma'(t)|\nu \cdot e\,dt\,dH^{n-1}\right)^{\frac{1}{p}} \\
&\leq C|\Omega_1|^{\frac{1}{p'}}\left(\int_{\Omega_1}|\nabla v|^p\,dx\right)^{\frac{1}{p}}\\
&\leq C|\Omega|^{\frac{1}{p'}}\delta,
\end{split}
\end{equation*}
where the second and third inequalities are due to the application of the H\"{o}lder's inequality with $p' = \frac{p}{p-1}$, and the constant $C$ depends on $n$ and $p$. The second equality follows from the two representation formulas while the radius $r$ of $\Lambda$ is taken small
\begin{equation*}
\left|\Omega_1\right| = C(n)\int_{\mathcal{DA}(\partial\Omega)}\left(L(\gamma_{x_0}) + r - \frac{r^n}{(L(\gamma_{x_0})+r)^{n-1}}\right)\nu(x_0)\cdot e(x_0)\,dH^{n-1}(x_0)
\end{equation*}
and
\begin{equation*}
\int_{\Omega_1}\left|\nabla v(x)\right|^p\,dx = C(n)\int_{\mathcal{DA}(\partial\Omega)}\int^1_0\left|\nabla v(\gamma_{x_0}(t))\right|^p\,\left|\gamma'_{x_0}(t)\right|\nu(x_0) \cdot e(x_0)\,dt\,dH^{n-1}(x_0).
\end{equation*}

Since $C\varepsilon \ll \sigma_m$, we can take sufficiently small $\delta$ that is independent of $\varepsilon$ such that $$C(n,p)|\Omega|^{\frac{1}{p'}}\,\delta \leq \frac{1}{2}\sigma_m \int_{\mathcal{DA}(\partial\Omega)}\nu(x_0)\cdot e(x_0)\,dH^{n-1}(x_0).$$ This leads to a contradiction. So, $\Lambda$ must be empty.

Finally we prove that $\|v\|_{\mathfrak{B}} = \delta$ implies
\begin{equation}
I[v]= \int_{\Omega}\frac{1}{p}|\nabla v + \nabla u_0|^p - \frac{1}{p}|\nabla u_0|^p \geq a\ \ \text{for a certain $a > 0$.}
\end{equation}

If $p\geq 2$, then the elementary inequality (\ref{ele3}) implies that
\begin{alignat*}{1}
I[v] &= \int_{\Omega}\frac{1}{p}\left|\nabla v+ \nabla u_0\right|^p - \frac{1}{p}\left|\nabla u_0\right|^p \\
&\geq \int_{\Omega}\langle\left|\nabla u_0\right|^{p-2}\nabla u_0, \nabla v\rangle + C(p)\left|\nabla v\right|^p \\
&= C(p)\int_{\Omega}\left|\nabla v\right|^p = C(p)\delta^p > 0,
\end{alignat*}
while if $1 < p < 2$, then the elementary inequality (\ref{ele4}) implies
\begin{alignat*}{1}
I[v] &\geq p(p-1)\int_{\Omega}\left|\nabla v\right|^2\int^1_0\int^t_0\frac{1}{\left|\nabla u_0 + s\nabla v\right|^{2-p}}\,ds\,dt\,dx \\
&\geq p(p-1)\int_{\Omega}\left|\nabla v\right|^2\int^1_0\int^t_0\frac{1}{\left(\left|\nabla u_0\right| + s\left|\nabla v\right|\right)^{2-p}}\,ds\,dt\,dx. 
\end{alignat*}
If $\int_{\Omega}|\nabla u_0|^p = 0$, then $I[v] = \frac{1}{p}\delta^p > 0$. So in the following, we assume $\int_{\Omega}|\nabla u_0|^p > 0$.

Let $S = S_{\lambda} = \{x\in\Omega\colon |\nabla v| > \lambda\delta\}$, where the constant $\lambda = \lambda(p,|\Omega|)$ is to be taken. Then
\begin{alignat*}{1}
\delta^p &= \int_{\Omega}|\nabla v|^p = \int_{\{|\nabla v|\leq \lambda\delta\}}|\nabla v|^p + \int_S|\nabla v|^p \\
&\leq (\lambda\delta)^p|\Omega| + \int_S|\nabla v|^p
\end{alignat*}
and hence
\begin{equation*}
\int_S|\nabla v|^p \geq \delta^p\left(1 - \lambda^p|\Omega|\right) \geq \frac{1}{2}\delta^p,\ \ \text{if $\lambda$ satisfies\ }\frac{1}{4} < \lambda^p|\Omega| \leq \frac{1}{2}.
\end{equation*}
Meanwhile, for $1 < p < 2$, it holds that
\begin{alignat*}{1}
I[v] &\geq C(p)\int_{S}\left|\nabla v\right|^2\int^1_0\int^t_0\frac{1}{\left(\left|\nabla u_0\right| + s\left|\nabla v\right|\right)^{2-p}}\,ds\,dt\,dx \\
&=C(p)\left(\int_{S\cap\{|\nabla u_0|\leq |\nabla v|\}}\left|\nabla v\right|^2\int^1_0\int^t_0\frac{1}{\left(|\nabla u_0| + s|\nabla v|\right)^{2-p}}\,ds\,dt\,dx \right. \\
&\ \ \ \ + \left.\int_{S\cap\{|\nabla u_0| > |\nabla v|\}}\left|\nabla v\right|^2\int^1_0\int^t_0\frac{1}{\left(\left|\nabla u_0\right| + s\left|\nabla v\right|\right)^{2-p}}\,ds\,dt\,dx\right).
\end{alignat*}
The first integral on the right satisfies
\begin{alignat*}{1}
&\ \ \ \ \int_{S\cap\{|\nabla u_0|\leq |\nabla v|\}}\left|\nabla v\right|^2\int^1_0\int^t_0\frac{1}{\left(|\nabla u_0| + s|\nabla v|\right)^{2-p}}\,dsdtdx \\
&\geq \int_{S\cap\{|\nabla u_0|\leq |\nabla v|\}}\left|\nabla v\right|^p\int^1_0\int^t_0\frac{1}{\left(1 + s\right)^{2-p}}\,dsdtdx \\
&= C(p)\int_{S\cap\{|\nabla u_0|\leq |\nabla v|\}}\left|\nabla v\right|^p\,dx,
\end{alignat*}
while the second integral on the right satisfies
\begin{alignat*}{1}
&\ \ \ \ \int_{S\cap\{|\nabla u_0| > |\nabla v|\}}\left|\nabla v\right|^2\int^1_0\int^t_0\frac{1}{\left(\left|\nabla u_0\right| + s\left|\nabla v\right|\right)^{2-p}}\,dsdtdx \\
&\geq \int_{S\cap\{|\nabla u_0| > |\nabla v|\}}\frac{\left|\nabla v\right|^2}{|\nabla u_0|^{2-p}}\int^1_0\int^t_0\frac{ds\,dt}{(1+s)^{2-p}}\,dx \\
&= C(p) \int_{S\cap\{|\nabla u_0| > |\nabla v|\}}\frac{\left|\nabla v\right|^2}{|\nabla u_0|^{2-p}}\,dx.
\end{alignat*}
The H\"{o}lder's inequality applied with exponents $\frac{2}{p}$ and $\frac{2}{2-p}$ implies that
\begin{equation*}
\int_{S\cap\{|\nabla u_0| > |\nabla v|\}}\left|\nabla v\right|^p \leq \left(\int_{S\cap\{|\nabla u_0| > |\nabla v|\}}\frac{|\nabla v|^2}{|\nabla u_0|^{2-p}}\right)^{\frac{p}{2}}\left(\int_{S\cap\{|\nabla u_0| > |\nabla v|\}}|\nabla u_0|^p\right)^{\frac{2-p}{2}},
\end{equation*}
or equivalently
\begin{alignat*}{1}
\int_{S\cap\{|\nabla u_0| > |\nabla v|\}}\frac{|\nabla v|^2}{|\nabla u_0|^{2-p}} &\geq \frac{\left(\int_{S\cap\{|\nabla u_0| > |\nabla v|\}}\left|\nabla v\right|^p\right)^{\frac{2}{p}}}{\left(\int_{S\cap\{|\nabla u_0| > |\nabla v|\}}|\nabla u_0|^p\right)^{\frac{2-p}{p}}} \\
&\geq \frac{\left(\int_{S\cap\{|\nabla u_0| > |\nabla v|\}}\left|\nabla v\right|^p\right)^{\frac{2}{p}}}{\left(\int_{\Omega}|\nabla u_0|^p\right)^{\frac{2-p}{p}}}.
\end{alignat*}
Consequently,
\begin{alignat*}{1}
I[v] &\geq C(p)\int_{S\cap\{|\nabla u_0|\leq |\nabla v|\}}|\nabla v|^p + C(p)\frac{\left(\int_{S\cap\{|\nabla u_0| > |\nabla v|\}}\left|\nabla v\right|^p\right)^{\frac{2}{p}}}{\left(\int_{\Omega}|\nabla u_0|^p\right)^{\frac{2-p}{p}}} \\
&\geq C(p)\left(\int_{S\cap\{|\nabla u_0|\leq |\nabla v|\}}|\nabla v|^p\right)^{\frac{2}{p}} + C(p)\frac{\left(\int_{S\cap\{|\nabla u_0| > |\nabla v|\}}\left|\nabla v\right|^p\right)^{\frac{2}{p}}}{\left(\int_{\Omega}|\nabla u_0|^p\right)^{\frac{2-p}{p}}},\ \ \text{as $\delta$ is small} \\
&\geq C(p)A(u_0)\left(\left(\int_{S\cap\{|\nabla u_0|\leq |\nabla v|\}}|\nabla v|^p\right)^{\frac{2}{p}} + \left(\int_{S\cap\{|\nabla u_0|> |\nabla v|\}}|\nabla v|^p\right)^{\frac{2}{p}}\right) \\
&\geq C(p)A(u_0)\left(\int_{S}|\nabla v|^p\right)^{\frac{2}{p}} = C(p)A(u_0)\delta^2,
\end{alignat*}
where the last inequality is a consequence of the elementary inequality
\begin{equation*}
a^{\frac{2}{p}} + b^{\frac{2}{p}} \geq C(p)\left(a+b\right)^{\frac{2}{p}}\ \ \text{for\ }\ a, b\geq 0,
\end{equation*}
and the constant
\begin{equation*}
A(u_0) = \min\left\{1,\frac{1}{\left(\int_{\Omega}|\nabla u_0|^p\right)^{\frac{2-p}{p}}}\right\}.
\end{equation*}

So we have proved $I[v] \geq a > 0$ for some $a> 0$ whenever $v\in C^{\infty}_0(\Omega)$ satisfies $\|v\|_{\mathfrak{B}} = \delta$, for any $p\in (1, \infty)$.
\end{pf}

Let
\begin{equation*}
\mathcal{G} = \{\gamma\in C([0,1],H): \gamma(0) = 0\ \text{and\ }\gamma(1) = v_2\}
\end{equation*}
and
\begin{equation*}
c = \inf_{\gamma\in\mathcal{G}}\max_{0\leq t\leq 1}I[\gamma(t)].
\end{equation*}
The verified Palais-Smale condition and the preceding lemma allow us to apply the Mountain Pass Theorem as stated, for example, in \cite{J} to conclude that there is a $v_1\in \mathfrak{B}$ such that $I[v_1] = c$,
and $I'[v_1] = 0$ in $\mathfrak{B}^*$.
That is
\begin{equation*}
\int_{\Omega}|\nabla u|^{p-2}\nabla u\cdot\nabla\varphi + \left(u\theta(u) + \Theta_{\varepsilon}(u) + q(x)\mu_{\varepsilon}(u)\right)\varphi\, dx = 0
\end{equation*}
for any $\varphi\in \mathfrak{B} = W^{1,p}_0(\Omega)$, where $u_1 = v_1 + u_0$. So $u_1$ is a weak solution of the problem (\ref{euler})-(\ref{bdry}). This concludes the proof of Theorem \ref{thirdsolution}.

In essence, the Mountain Pass Theorem is a way to produce a saddle point solution. Therefore, in general, $u_1$ tends to be an unstable solution in contrast to the stable solutions $u_0$ and $u_2$.

\section{Radially Symmetric Solutions to the Unregularized Problem}

In this section we discuss a special case that beautifully illustrates the bifurcation phenomenon.  Specifically, we assume that $\Omega = B_R(0) \subset\mathbb{R}^n$, taking the outer radius $R = 1$ for convenience, and assume that the solution being discussed is radially symmetric. In addition we take the function $Q(x) = 1$. As such, the system \eqref{euler}-\eqref{bdry} is now given by the nonlinear ordinary differential equation

\begin{equation}\label{DE_general}
\left\{\begin{array}{ll} 
-(p-1)u_r^{p-2} u_{rr} - (n-1) \frac{u_r^{p-1}}{r} + \theta_\varepsilon (u) u + \Theta_\varepsilon (u) + \mu_\varepsilon(u) = 0 \quad & {\rm in}~ B_1(0)\\[1ex]
~~u = \sigma & {\rm on}~\partial B_1\\\end{array}\right.
\end{equation}
where we recall $\theta_{\varepsilon}(s) = \Theta'_{\varepsilon}(s)$, and $\mu_{\varepsilon}(s) = \lambda'_{\varepsilon}(s)$.  Given constant boundary data $\sigma$,  
we fix the regularization parameter $0< \varepsilon << \sigma$. The radius $r_0$ is then given by $u(r_0) = 0$ and the radius $r_1$ is given by $u(r_1) = \varepsilon$.  Substituting allows us to rewrite the system \eqref{DE_general} in the form 
\begin{equation}\label{DE_1}
\left\{\begin{array}{ll} 
-(p-1)u_r^{p-2} u_{rr} - (n-1) \frac{u_r^{p-1}}{r} = -\Delta_p u = 0 & {\rm in}~r_1 < |x| < 1 \\[1ex]
-(p-1)u_r^{p-2} u_{rr} - (n-1) \frac{u_r^{p-1}}{r} + \theta_\varepsilon (u) u + \Theta_\varepsilon (u) + \mu_\varepsilon(u) = 0 \quad &{\rm in}~ r_0 < |x| < r_1 \\[1ex]
-(p-1)u_r^{p-2} u_{rr} - (n-1) \frac{u_r^{p-1}}{r} = -\Delta_p u = - 1 & {\rm in}~ |x| < r_0\\[1ex]
~~u = \sigma & {\rm on}~|x| = 1
\end{array}\right.
\end{equation}
How many solutions exist to \eqref{DE_1}?  We expect to be able to explicitly show the bifurcation guaranteed by the main theorem, but even in the ODE setting this is not a trivial problem. 










As a further simplification, we consider the unregularized case \eqref{unreg}.  As such, $\varepsilon= 0$ and the free boundary at $|x| = r_0$ has a Bernoulli condition $(u_\nu^+)^p = (u_\nu^-)^p + \Lambda$, where $\Lambda=\lambda^p_2-\lambda^p_1$ from \eqref{fcts3}.
The system is now of the form
\begin{equation}\label{DE_2}
\left\{
\begin{array}{ll}
\Delta_p u = 0 & {\rm in}~ r_0 < |x| < 1\\[1ex]
\Delta_p u = 1 &  {\rm in}~|x| < r_0\\[1ex]
u = 0,\quad (u_\nu^+)^p = (u_\nu^-)^p + \Lambda\quad & {\rm on}~|x| = r_0\\[1ex]
u = \sigma & {\rm on}~|x| = 1
\end{array}\right.
\end{equation}
as is illustrated in Figure \ref{unreg_DE_domain}.

\begin{figure}[ht]
\centering

\begin{tikzpicture}[scale=3]

\def\R{1}
\def\rzero{0.35}

\draw (0,0) circle (\R);
\draw (0,0) circle (\rzero);

\fill (0,0) circle (0.02);

\draw[->, thick] (0,0) -- ({\R*cos(30)},{\R*sin(30)});
\node at (0.50,0.42) {$1$};

\draw[->, thick] (0,0) -- ({\rzero*cos(-30)},{\rzero*sin(-30)});
\node at (0.12,-0.18) {$r_0$};

\end{tikzpicture}
\caption{Domain for the system \eqref{DE_2}}
\label{unreg_DE_domain}

\end{figure}

\subsection{Case $p=2, n>2$}
We first consider the case of the ordinary Laplacian, $p = 2$. Using $r = |x|$, and $u^+, u^-$ to denote the solution where $\{u > 0\}, \{u < 0\}$, respectively, the system becomes
\begin{equation}\label{DE_lap}
\left\{
\begin{array}{ll}
\Delta u = 0 & {\rm in}~  r_0 < r < 1\\[1ex]
\Delta u = 1 &  {\rm in}~ r < r_0\\[1ex]
u = 0,\quad (u_r^+)^2 = (u_r^-)^2 + \Lambda\quad & {\rm on}~r = r_0\\[1ex]
u = \sigma & {\rm on}~ r = 1
\end{array}\right.
\end{equation}
In the harmonic region $r_0 < r < 1$, the boundary conditions $u = 0, r = r_0$ and $u = \sigma, r = 1$ give
\begin{equation}\label{lap_0}
u^+(r) =\sigma\frac{r^{2-n} - r_0^{2-n}}{1-r_0^{2-n}} 
\end{equation}
In the region $u<r_0$, $\Delta u =1$ becomes
$$u_{rr} + \frac{n-1}{r}u_r = 1$$
so applying an integrating factor allows us to solve for $u_r^-$ with unknown constant $A$ as
$$u_r^-(r) = \frac{1}{n} r + \frac{A}{r^{n-1}}$$
Noting that $|u|<\infty$, integrating back and applying the boundary condition $u(r_0) = 0$ gives
\begin{equation}\label{lap_1}
u^-(r) = \frac{1}{2n}(r^2-r_0^2)
\end{equation}
We  use \eqref{lap_0} and \eqref{lap_1} in the jump condition at $r_0$ given by $(u_r^+)^2 = (u_r^-)^2 + \Lambda$ to derive the following relationship between $\sigma$ and $r_0$:
\begin{equation}\label{sigma_fct_p2}
\sigma = f(r_0) = \frac{1}{2-n}\left(r_0^{n-1}-r_0\right)\sqrt{\left( \frac{r_0^2}{n^2} + \Lambda\right)}
\end{equation}
The function $f(r_0)$ can be shown to be unimodal since $r_0^{n-1}-r_0$ is unimodal and the $\sqrt{\left( \frac{r_0^2}{n^2} + 1\right)}$ is monotonic and log-concave.  As a helpful visual, we graph the relationship for values of $n=3,~ n=4$ in Figure \ref{sigma_graph}. 
\begin{figure}[ht]
\centering
\includegraphics[width=0.6\textwidth]{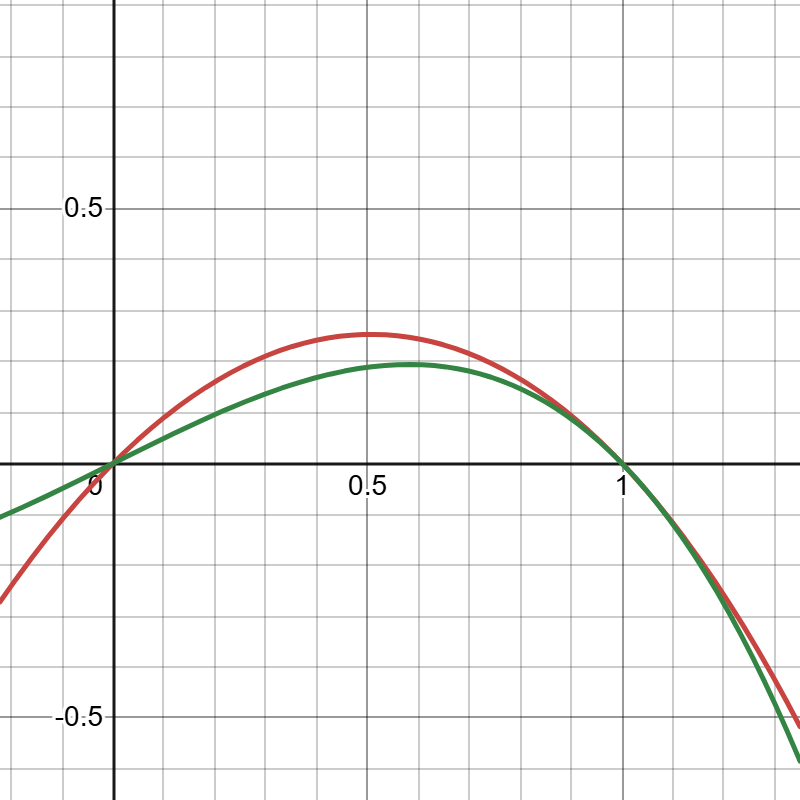}
\caption{$\sigma$ as a function of $r_0$ for $n=3$ (red) and $n=4$ (green)}
\label{sigma_graph}
\end{figure}
As such, for a given value of $\sigma$ we can see that the system admits zero, one or two solutions. 

\subsection{General case,  $p\ne n$}

We consider now the general case for any $p$ and $n$ with $p\ne n$. In this case our differential equation becomes 
\begin{equation}\label{DE_unreg_plap}
\left\{\begin{array}{ll} 
(p-1)u_r^{p-2} u_{rr} + (n-1)\frac{u_r^{p-1}}{r} = 0 & {\rm in}~r_0 < r < 1 \\[2ex]
(p-1)u_r^{p-2} u_{rr} + (n-1)\frac{u_r^{p-1}}{r} =  1 & {\rm in}~ r  < r_0\\[2ex]
u = 0,\quad (u_r^+)^p = (u_r^-)^p + \Lambda\quad & {\rm on}~r = r_0\\[1ex]
u = \sigma & {\rm on}~ r = 1
\end{array}\right.
\end{equation}
Multiplying by $u_r$, rearranging, applying integrating factor and the boundary conditions gives the following solution in the region $r_0<r<1$:
$$u^+(r) = \sigma\left(\frac{r^m-r_0^m}{1-r_0^m}\right)$$
so that 
\begin{equation}\label{urplus}
u^+_r(r) = \frac{\sigma}{(1-r_0^m)} mr^{m-1}
\end{equation}
where $m = \frac{p-n}{p-1}$.
Similarly, for $r<r_0$, we multiply by $r^{n-1}$, integrate, and apply the boundary conditions which (after noting that $|u_r(0)|<\infty$) gives
$$u^-(r) = \frac{p-1}{p}\left(\frac{1}{n}\right)^{\frac{1}{p-1}} \left( r^\frac{p}{p-1} - r_0^\frac{p}{p-1}\right)$$
so that
\begin{equation}\label{urminus}
u^-_r(r) = \left(\frac{r}{n}\right)^{\frac{1}{p-1}}
\end{equation}
Now, after applying the Bernoulli condition at $r=r_0$ and solving for $\sigma$ we have a function of the form
\begin{equation}\label{sigma_fct_p}
\sigma = f(r_0) = \frac{1}{m}\left( r_0^{1-m} - r_0\right)\left[ \left(\frac{r_0}{n}\right)^\frac{p}{p-1} + \Lambda\right]^\frac{1}{p}
\end{equation}
which is consistent with the analogous equation \eqref{sigma_fct_p2}.
We note that $f(0) = 0$ and $f(1) = 0$. We claim that this $f$, viewed as a function of the variable $r_0$, is a unimodal function with a unique maximum that is achieved at some value $r^*$ between zero and one. Thus, any value of the parameter $\sigma$ will result in either zero, one or two possible values of $r_0$ that solve the equation \eqref{sigma_fct_p2}.

This can be shown by decomposing $f$ in the following manner:
\begin{equation}\label{fgphi}
f(r_0) = \frac{1}{m}\left( r_0^{1-m} - r_0\right)\left[ \left(\frac{r_0}{n}\right)^\frac{p}{p-1} + 1\right]^\frac{1}{p} = g(r_0) \varphi(r_0)
\end{equation}
with
\begin{equation}\label{gphi_def}
g(r_0) = \frac{1}{m}\left( r_0^{1-m} - r_0\right);\qquad \varphi(r_0) = \left[ \left(\frac{r_0}{n}\right)^\frac{p}{p-1} + 1\right]^\frac{1}{p}
\end{equation}
We can see that 
\begin{equation}\label{f_unimodal}
f'(r_0) = \varphi(r_0)\left[g'(r_0) + g(r_0) \frac{\varphi'(r_0)}{\varphi(r_0)}\right]
\end{equation}
The function $\varphi(r_0)$ is positive, so the sign of $f'$ is determined by the expression in brackets.  Now, the function $g(r_0)\ge 0$ on $[0,1]$ and we can check that
$g(r_0)$ is a unimodal function: 
$$g'(r_0)   = \frac{1}{m}\left( (1-m)r_0^{-m} - 1\right)$$
For $p<n$, $m<0$ so $g'(0) = \frac{p-1}{n-p} >0$ and for $n<p$, $m>0$ so $g'(r_0)\rightarrow \infty$ as $r_0\rightarrow 0$.  For both cases, $g'(1) = -1$; and solving for where $g'(r_0^*) = 0$ gives a unique maximum for $g$ at $r_0^* = \left(\frac{p-1}{n-1}\right)^{-\frac{p-1}{p-n}}\in (0,1)$.
In contrast, the function $\varphi(r_0)$ is positive and monotone increasing on $[0,1]$.  In addition, $\varphi$ is log-convave.  That is to say, $(\frac{\varphi'}{\varphi})'<0$. By taking the derivative of the expression in brackets in \eqref{f_unimodal}, applying the facts above and the intermediate value theorem, we can show that $f(r_0)$ has exactly one maximum point in the interval $(0,1)$.

\subsection{General case, $p=n$}

With $p=n$ the system becomes
\begin{equation}\label{DE_unreg_peqn}
\left\{\begin{array}{ll} 
u_r^{n-2} u_{rr} + \frac{u_r^{n-1}}{r} = 0 & {\rm in}~r_0 < r < 1 \\[2ex]
(n-1)u_r^{n-2} u_{rr} + (n-1)\frac{u_r^{n-1}}{r} =  1 & {\rm in}~ r  < r_0\\[2ex]
u = 0,\quad (u_r^+)^n = (u_r^-)^n + \Lambda\quad & {\rm on}~r = r_0\\[1ex]
u = \sigma & {\rm on}~ r = 1
\end{array}\right.
\end{equation}
Solving  in the same way on the region $r_0<r<1$ gives
$$u^+(r) = u(r)=\sigma\left(1-\frac{\log r}{\log r_0}\right)$$
and on $r<r_0$ we have
$$u^-(r) =
\frac{n-1}{n^{\,n/(n-1)}}
\left(
r^{\,n/(n-1)}-r_0^{\,n/(n-1)}
\right)$$
Applying the Bernouilli condition at $r=r_0$ and solving for $\sigma$ gives the function
$$ \sigma =  f(r_0) = -r_0\log r_0
\left[
\left(\frac{r_0}{n}\right)^{\frac{n}{n-1}} 
+ \Lambda
\right]^{1/n}. $$
We can rewrite this as $f(r_0) = g(r_0)\varphi(r_0)$ as before.  Notice that $\varphi$ is identical to the previous case, though $g(r_0)$ is different.  It is easy to check that this $g(r_0)$ is also unimodal,  with unique maximum at $r_0^* = 1/e$.  Thus by the previous argument, this $f(r_0)$ is also unimodal and we see the expected bifurcation of solutions.

\end{document}